\documentclass[12pt]{article}
\usepackage[dvips]{color}
\setlength{\oddsidemargin}{0.25in}
\setlength{\textwidth}{6in}
\setlength{\topmargin}{0.25in}
\setlength{\textheight}{8in}
\usepackage{authblk}

\usepackage[all]{xy}
\usepackage{amsmath,amssymb,amsfonts,amsthm}
\usepackage{eucal}
\usepackage{verbatim} 
\usepackage{graphicx} 

\usepackage{lineno}
\usepackage{ifthen} 

\usepackage{stmaryrd}

\setcounter{MaxMatrixCols}{10}
\newtheorem{theorem}{Theorem}[section]

\newtheorem{corollary}[theorem]{Corollary}

\theoremstyle{definition}

\theoremstyle{remark}

\newtheorem{remark}[theorem]{Remark}

\numberwithin{equation}{section}





\begin{document}


\title{On the global dynamics of periodic triangular maps}

\author[1,2]{Rafael Lu\'{i}s\thanks{rafael.luis.madeira@gmail.com}}
\affil[2]{University of Madeira, Funchal, Madeira, Portugal.}
\affil[3]{Center for Mathematical Analysis, Geometry, and Dynamical Systems,
University of Lisbon, Lisbon, Portugal.}


\maketitle

\begin{abstract}
  This paper is an extension of an earlier paper that dealt with global dynamics in autonomous triangular maps. In the current paper, we extend the results on global dynamics of autonomous triangular maps to periodic non-autonomous triangular maps. We show that, under certain conditions,  the orbit of every point in a periodic non-autonomous triangular map converges to a fixed point (respectively, periodic orbit of period $p$) if and only if there is no periodic orbit of prime
  period two (respectively, periodic orbits of prime period greater than $p$). 
\end{abstract}

\textbf{Keys Words:} {Periodic triangular maps, Periodic non-autonomous difference equations, Stability, Global dynamics, Applications}

\textbf{AMS:} {Primary 37C75},  Secondary{ 39A23, 39A30.}

\section{Introduction}
Taking advantage of the structure of triangular maps, Balreira, Elaydi and Lu\'{\i}s \cite{BELTriangular2014}  have been shown global dynamics of higher dimensional autonomous triangular maps. Basically, they consider autonomous continuous triangular maps on $I^k$, where $I$ is a compact interval in the Euclidean space $\mathbb{R}$. It is shown that, under some conditions, the orbit of every point in a triangular map converges to a fixed point if and only if there is no periodic orbits of prime period two. As a consequence of this conclusion, a result on global stability have been investigated. Namely, if there are no
periodic orbits of prime period 2 and the triangular map has a unique fixed point, then the
fixed point is globally asymptotically stable. This last conclusion takes a paramount importance in some applications. Particularly, in population dynamics when we are interested to investigate when local stability implies global stability.

In the current paper, we extend the ideas presents in \cite{BELTriangular2014}, to a periodic non-autonomous triangular difference equations. We show, under certain conditions, global dynamics of fixed points or periodic orbits in the periodic non-autonomous difference equation
\[
X_{n+1}=F_n(X_n),
\]
where $X\in I^k$ and $F_n$ is a periodic sequence of triangular maps. 

The paper is organized as follows: in Section \ref{sec_prel} we introduce the relevant concepts to our analysis. In the next section, we deal with periodic non-autonomous triangular maps and give the results in global dynamics and global stability in such periodic non-autonomous dynamical systems. Finally, in Section \ref{examples} we provide illustrative examples. Particularly, we study a periodic triangular Leslie-Gower competition model, a periodic triangular Ricker type model and a periodic triangular model given by some affine combination. 

It is important  to mention that, in the study of some periodic non-autonomous concrete models, one of the main difficulties encountered is precisely to find analytically the fixed points of the composition. This is precisely the nature of non-autonomous models. That computations are possible only in some limited models and in general, we are not able to find analytically, periodic orbits with period greater than the period of the system. Hence, in this field, is natural to use a software that help us in such computations. In the examples that we provide in this paper we are able to solve analytically the equations. We point out that some known one-dimensional examples can be extended  to a higher dimension as a periodic triangular map, such as the logistic model for instance. However, the computations of the periodic points of the periodic system are possible only by  numerical methods.

Before end this section, we notice that recently, the study of the dynamics of triangular maps has been increase. To cite a few papers, in \cite{manosa2014} Cima, Gassul and Ma\~{n}osa study the dynamics of a planar triangular map of the form $x_{n+1}=f(u_n)+g(u_n)x_n$ and $u_{n+1}=\phi(u_n)$. In \cite{BALIBREA203} the authors study the $\omega-$limit sets of some $3-$dimensional triangular maps on the unit cube.

In the terminology of population dynamics,  triangular models are known as hierarchical models. In the paper \cite{AEKLR2015a} Assas et al. have been study a two-species hierarchical competition model with a strong Allee effect with no immigration (the Allee effect is assumed to be caused by predator saturation). Later on, in \cite{AEKLR2015b} the authors simplified the techniques employed  in the previous paper and provided a proof in global dynamics of the non-hyperbolic cases that were previously conjectured. They have been shown how immigration to one of the species or to both would, drastically, change the dynamics of the system. More precisely, it is shown that if the level of immigration to one or to both species is above a specified level, then there will be no extinction region where both species go to extinction.

\section{Preliminaries}\label{sec_prel}

A map $F: \mathbb{R}^k \rightarrow \mathbb{R}^k$ is said to be \emph{triangular} if it can be written as 
\[
F(x_1, x_2, ..., x_k) = (f_1(x_1), f_2(x_1,x_2), ... , f_k(x_1, x_2, ..., x_k))
\]
For a point $\mathbf{x} \in \mathbb{R}^k$, the orbit of $\mathbf{x}$ under $F$ is given by 
\[
O(\mathbf{x}) = \{F^n(\mathbf{x})|\,n \in \mathbb{Z}^+\},
\] 

\noindent where $\mathbb{Z}^+$ is the set of nonnegative integers. The omega limit set $\omega(\mathbf{x})$ of $\mathbf{x}\in \mathbb{R}^k$ is defined as \[\omega(\mathbf{x}) =\{z \in \mathbb{R}^k |\, F^{n_i}(\mathbf{x}) \rightarrow z,\text{ as }n_i \rightarrow \infty\},\] for some subsequence $n_i$ of $\mathbb{Z}^+$. The set $\omega(\mathbf{x})$ is closed and invariant and it is non-empty if the orbit closure of $\mathbf{x}$ is compact.

The following theorem by W. A. Coppel deals with global dynamics of one-dimensional maps.

\begin{theorem}[W. A. Coppel, 1955~\protect\cite{Coppel1955}]
\label{thm_no_periodic_points} Let $I=[a,b]\subseteq \mathbb{R}$ and $%
f:I\rightarrow I$ be a continuous map. If the equation $f\left( f\left(
x\right) \right) =x$ has no roots, with the possible exception of the roots
of the equation $f\left( x\right) =x\,$, then every orbit under the map $f$
converges to a fixed point.
\end{theorem}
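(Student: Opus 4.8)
I would prove this by reducing everything to the sign pattern of the second iterate. First, $f$ has a fixed point, since $f(a)-a\ge 0\ge f(b)-b$ and $f-\mathrm{id}$ is continuous. The hypothesis says precisely that $\mathrm{Fix}(f^2)=\mathrm{Fix}(f)$, so $\phi(x):=f^2(x)-x$ is continuous, vanishes exactly on $\mathrm{Fix}(f)$, and hence has constant sign on each connected component of $I\setminus\mathrm{Fix}(f)$. The main reduction is: \emph{it is enough to show that every orbit of $g:=f^2$ converges}. Indeed, for $x_0\in I$ and $x_n=f^n(x_0)$, the even subsequence $(x_{2n})_n$ is a $g$-orbit, so it converges to some $p$; by continuity $x_{2n+1}=f(x_{2n})\to f(p)=:q$ and then $x_{2n+2}=f(x_{2n+1})\to f(q)$, whence $p=f(q)$. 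Thus $f(p)=q$ and $f(q)=p$; if $p\ne q$ this is an orbit of prime period $2$, contradicting the hypothesis, so $p=q\in\mathrm{Fix}(f)$ and $x_n\to p$.

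For the convergence of a $g$-orbit $(y_n)$, I would argue as follows. If some $y_N\in\mathrm{Fix}(f)$ the orbit is eventually constant, so assume all $y_n\notin\mathrm{Fix}(f)$. Whenever $y_n$ and $y_{n+1}$ lie in the same component $(\alpha,\beta)$ of $I\setminus\mathrm{Fix}(f)$ we have $y_{n+1}-y_n=\phi(y_n)$ of that component's fixed sign, so along any maximal run inside a single component the orbit is monotone; if it never leaves a component it converges monotonically to an endpoint, which lies in $\mathrm{Fix}(f)$. The only other possibility is that the orbit crosses points of $\mathrm{Fix}(f)$ repeatedly; here I would use the Lemma below, applied to $h=g$, which says that $y_j$ is never strictly between $y_{j+1}$ and $y_{j+2}$. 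With a little bookkeeping this forces, once the orbit starts oscillating across a fixed point $w$, the subsequence of terms on the side of $w$ where $\phi>0$ to be eventually monotone toward $w$, and the subsequence on the other side likewise. Both one-sided subsequences are bounded, hence converge, to $\ell_-\le w\le\ell_+$ with $g(\ell_-)=\ell_+$ and $g(\ell_+)=\ell_-$; since $g=f^2$ has no orbit of prime period $2$ either, $\ell_-=\ell_+=w$, so the $g$-orbit converges.

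The single remaining ingredient is the Lemma: \emph{if $h:I\to I$ is continuous and has no orbit of prime period $2$, then $(h(x)-x)(h^2(x)-x)\ge 0$ for every $x$}; I apply it with $h=f^2$, which has no orbit of prime period $2$ because $f$ has none (equivalently, $f$ has no orbit of period $4$ — the Sharkovskii implication $4\Rightarrow 2$). To prove the Lemma, suppose $h(x)>x$ but $h^2(x)<x$, so $h^2(x)<x<h(x)$; the intermediate value theorem applied to $h-\mathrm{id}$ on $[x,h(x)]$ gives a fixed point $c$ of $h$ with $x<c<h(x)$. Following images, $h([x,c])\supseteq[c,h(x)]$ and $h([c,h(x)])\supseteq[h^2(x),c]$, and the latter interval \emph{strictly} contains $[x,c]$ since $h^2(x)<x$; from this horseshoe one should extract an orbit of $h$ of prime period $2$, a contradiction. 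I expect that extraction to be the main obstacle: the crude covering lemma only produces a fixed point of $h^2$ inside $[x,c]$, which could be the point $c$ at the overlap of the two intervals, so one must exploit the strict inclusion (due to $h^2(x)<x$) to locate a genuinely $2$-periodic point. Aside from that, the only delicate point is the bookkeeping in the second paragraph, especially when $\mathrm{Fix}(f)$ is infinite, where "eventually monotone one-sided subsequences" must be replaced by a compactness argument.
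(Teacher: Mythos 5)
The paper offers no proof of this statement---it is quoted verbatim from Coppel's 1955 article---so I can only judge your proposal on its own terms. Your overall architecture is sensible: the first-paragraph reduction to $g=f^{2}$ is correct (though redundant, since the argument you then run for $g$ would run verbatim for $f$ itself), and your Lemma is in fact a true statement. Moreover the Lemma admits a clean proof that avoids the horseshoe difficulty you flagged: if $h(x)>x$ and $h^{2}(x)<x$, then on the whole component $C$ of $x$ in $I\setminus\mathrm{Fix}(h)$ one has $h>\mathrm{id}$ and $h^{2}<\mathrm{id}$ (constant signs, since $h^{2}-\mathrm{id}$ vanishes only on $\mathrm{Fix}(h)$); this forces $h(t)\ge\sup C$ for every $t\in C$ (otherwise $h(t)\in C$ and $h^{2}(t)>h(t)>t$), which contradicts continuity at the left endpoint of $C$ (a fixed point of $h$, or else $\min I$, where $h^{2}<\mathrm{id}$ is impossible). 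So that flagged obstacle is removable.

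The genuine gap is the ``little bookkeeping'' in your second paragraph. The two facts you actually have --- constant sign of $g-\mathrm{id}$ on each component of $I\setminus\mathrm{Fix}(f)$, and $(y_{n+1}-y_n)(y_{n+2}-y_n)\ge 0$ for all $n$ --- do \emph{not} force the one-sided subsequences of an oscillating orbit to be eventually monotone toward the crossed fixed point. Concretely, the segment
\begin{equation*}
y_0=5,\quad y_1=10,\quad y_2=6,\quad y_3=2,\quad y_4=5.5,\quad y_5=11
\end{equation*}
satisfies your Lemma at every index, since $(10-5)(6-5)$, $(6-10)(2-10)$, $(2-6)(5.5-6)$ and $(5.5-2)(11-2)$ are all positive, and it is compatible with a fixed point $w\in(5.5,6)$ with $g>\mathrm{id}$ to its left and $g<\mathrm{id}$ to its right; yet the terms to the left of $w$ are $5,\,2,\,5.5$, which move away from $w$ and then back. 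What actually kills this segment is the Lemma applied to $g^{2}$ rather than to $g$: here $g^{2}(10)=g(6)=2<10$ while $g^{4}(10)=g^{2}(2)=g(5.5)=11>10$, so $g^{2}$ would have a prime-period-two orbit, hence $g$ a period-four orbit, hence (Sharkovskii again) a period-two orbit. In other words, controlling a non-alternating oscillation requires the Lemma for the whole ladder $g, g^{2}, g^{4},\dots$ (equivalently, excluding periods $2^{k}$ for all $k$) together with an argument that this ladder eventually pins the orbit down --- or else a genuinely different mechanism, such as showing that the omega-limit set of a non-convergent orbit must contain a non-fixed point and extracting a period-two orbit from that. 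As written, the central convergence step --- which is the entire content of Coppel's theorem --- is asserted rather than proved.
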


Throughout this paper we will dell with the following notions of stability. Let us consider the map $F: I^k \rightarrow I^k$ of class $C^1$ and let $\mathbf{x}^*$ be a fixed point of the map $F$. We say that:
\begin{enumerate}
	\item $\mathbf{x}^*$ is \textit{stable} if for any $\epsilon>0$, there is $\delta>0$ such that for all $\mathbf{x}_0\in I^k$ with $\|\mathbf{x}_0-\mathbf{x}^*\| < \delta$ we have $\|F(\mathbf{x}_0) - F(\mathbf{x}^*)\|<\epsilon$. Otherwise, $\mathbf{x}^*$ is said to be unstable.
	\item $\mathbf{x}^*$ is \textit{attracting} if there is $\eta>0$ such that $\|\mathbf{x}_0-\mathbf{x}^*\| < \eta$ implies $\lim_{n\to\infty} F^n(\mathbf{x}_0) = \mathbf{x}^*$.
	\item $\mathbf{x}^*$ is \textit{asymptotically stable} if it is both stable and attracting.	
	\item $\mathbf{x}^*$ is globally asymptotically stable if the above definition holds for all $\eta$.
\end{enumerate}

Notice that the above definitions 1. - 3. are local, that is, only need to be satisfied on a neighborhood of $\mathbf{x}^*$.

In order to study the stability in higher dimensional models, we have to look at the eigenvalues of the Jacobian matrix of~$F$ at $\mathbf{x}^*$, denoted by $JF(\mathbf{x}^*)$. Indeed, let us define $\sigma\left(\mathbf{x}^*\right) = \left\{\lambda\in \mathbb{C} |\, \lambda\mbox{ is an eigenvalue of }JF(\mathbf{x}^*) \right\}$. When clear from context, we will simple denote $\sigma\left(\mathbf{x}^*\right)$ as $\sigma$. We have that $\sigma$ decomposes as the subsets
$\sigma^s = \{\lambda \in \mathbb{C} |\,|\lambda| < 1\}$, $\sigma^c = \{\lambda \in \mathbb{C} |\,|\lambda| = 1\}$ and $\sigma^u = \{\lambda \in \mathbb{C} |\,|\lambda| > 1\}$.

For each of the subsets above, the tangent space of $\mathbf{x}^*$ is decomposed into the invariant subspaces $E^s$, $E^c$, $E^u$ corresponding to $\sigma ^s$, $\sigma ^c$, $\sigma ^u$, that is, $\mathbb{R}^k=E^s \oplus E^c \oplus E^u$. It should be noted that some of these subspaces may be trivial subspaces.  

Given an open neighborhood $U$ of $\mathbf{x}^*$, the local stable manifold for $\mathbf{x}^*$ in this neighborhood is defined to be the set

\[
W^s_{loc}(\mathbf{x}^*, U)=\{z \in U |\, \lim \limits_{n\to \infty} F^n(z)=\mathbf{x}^*\}
\]
and the local unstable manifold for $\mathbf{x}^*$ in the neighborhood $U$ is the set
\begin{eqnarray*}
	W^u_{loc}(\mathbf{x}^*, U)&=&\{z \in U|\, \mbox{ there exists a complete negative orbit } \{\mathbf{x}^*_{-n}\} \subset U \\&&\mbox{ such that } \lim \limits_{n\to \infty} \mathbf{x}^*_{-n}=\mathbf{x}^*\}
\end{eqnarray*}

The stable manifold theory \cite{ElaydiDC,Robinson1999}  guarantees the existence of the local stable and unstable manifolds in a suitable open neighborhood $U$ of the fixed point $\mathbf{x}^*$.  In this case, we denote the stable, center, and unstable manifolds of $\mathbf{x}^*$ by $W^s_{loc}(\mathbf{x}^*)$,  $W^c_{loc}(\mathbf{x}^*)$, $W^u_{loc}(\mathbf{x}^*)$, respectively.  Moreover, all these manifolds are invariant.

Taking advantage of the structure of triangular maps, Balreira, Elaydi and Luis \cite{BELTriangular2014} extended Theorem \ref{thm_no_periodic_points} to to $k-$dimensional triangular maps. We now give the main result in \cite{BELTriangular2014}.

\begin{theorem}[Balreira, Elaydi \& Lu\'{\i}s, 2014,~\cite{BELTriangular2014}]\label{main_th_BEL}
Let $F: I^k \rightarrow I^k$ be a continuous $C^1$ triangular map such that each fixed point $\mathbf{x}^*$ is a locally stable fixed point of $F|_{W^c_{loc} (\mathbf{x}^*)}$. Then every orbit in $I^k$ converge to a fixed point if and only if there are no periodic orbits of prime period~two.
\end{theorem}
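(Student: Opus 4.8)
I would treat the two implications separately. The forward implication is immediate: if every orbit in $I^k$ converges to a fixed point and $\{\mathbf a,\mathbf b\}$ were a prime period-two orbit, then the orbit of $\mathbf a$ alternates between the distinct points $\mathbf a$ and $\mathbf b$ and cannot converge, a contradiction. For the converse I would induct on the dimension $k$. When $k=1$ the hypothesis that $F$ has no prime period-two orbit says exactly that $f(f(x))=x$ has no solution other than a solution of $f(x)=x$, so Theorem~\ref{thm_no_periodic_points} gives the conclusion. For the inductive step write $F=(f_1,\dots,f_k)$; the first coordinate is governed by the one-dimensional map $f_1$. I would first show $f_1$ has no prime period-two orbit: if $\{\alpha,\beta\}$ with $\alpha\neq\beta$ were one, then $F^2$ is a continuous triangular map whose first-coordinate map fixes $\alpha$, hence $F^2$ maps the compact convex slice $\{\alpha\}\times I^{k-1}$ into itself and, by Brouwer's fixed point theorem, has a fixed point $\mathbf c$ there; since the first coordinate of $F(\mathbf c)$ is $f_1(\alpha)=\beta\neq\alpha$, the point $\mathbf c$ would be a prime period-two point of $F$, a contradiction. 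By Coppel's theorem every orbit of $f_1$ then converges to a fixed point, so for a given $\mathbf x=(x_1,\dots,x_k)$ there is a fixed point $p$ of $f_1$ with $f_1^n(x_1)\to p$, and therefore $\omega(\mathbf x)\subseteq\{p\}\times I^{k-1}$.

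Since $f_1(p)=p$, the slice $L_p:=\{p\}\times I^{k-1}$ is forward invariant and $\widetilde F:=F|_{L_p}$ is a continuous $C^1$ triangular map on $I^{k-1}$. I would check that $\widetilde F$ inherits the hypotheses of the theorem: a prime period-two orbit of $\widetilde F$ would be one of $F$; and at a fixed point $\mathbf c=(p,c)$ of $F$ lying in $L_p$ the matrix $JF(\mathbf c)$ is block lower triangular with corner blocks $f_1'(p)$ and $J\widetilde F(c)$, so, using that $L_p$ is invariant and carries a center manifold of $\widetilde F$ at $c$, the reduction principle lets the local stability of $F$ on $W^c_{loc}(\mathbf c)$ descend to local stability of $\widetilde F$ on $W^c_{loc}(c)$. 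By the induction hypothesis every orbit of $\widetilde F$ converges to a fixed point; equivalently, every omega limit set of $\widetilde F$ is a single fixed point.

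It remains to transfer this to the orbit of $\mathbf x$, which lies only \emph{asymptotically} in $L_p$, and this is the crux. The set $\omega(\mathbf x)$ is compact, $F$-invariant and, being the omega limit set of a point under a continuous self-map of a compact space, internally chain transitive; since it is contained in $L_p$, where $F$ and $\widetilde F$ coincide, it is internally chain transitive for $\widetilde F$. Choosing any $z\in\omega(\mathbf x)$, the $\widetilde F$-orbit of $z$ stays in $\omega(\mathbf x)$ and converges, so $\omega(\mathbf x)$ contains a fixed point $w$ of $F$. To conclude that $\omega(\mathbf x)=\{w\}$ I would again invoke the center-manifold stability hypothesis at the fixed points of $\widetilde F$: via the reduction principle each such fixed point has a small neighborhood on which the local dynamics is trapped and convergent, and, with a little extra care about sufficiently fine pseudo-orbits, this is incompatible with $\omega(\mathbf x)$ being a nontrivial internally chain transitive set.

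The step I expect to be the main obstacle is precisely this last transfer. Internal chain transitivity by itself does not force $\omega(\mathbf x)$ to be trivial even under the stated hypotheses — the identity map on $I^k$ has no prime period-two orbit and satisfies the center-manifold condition, yet $I^k$ itself is a nontrivial internally chain transitive set for it — so one must genuinely use that $\omega(\mathbf x)$ is an \emph{omega limit set} and control the non-autonomous tail $f_1^n(x_1)\to p$ by means of the local stability on the center manifolds, with the slow (non-hyperbolic) behaviour of $f_1$ near $p$ being the most delicate case. Verifying that $\widetilde F$ inherits the center-manifold hypothesis is a secondary technical point.
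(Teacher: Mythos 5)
This theorem is quoted in the present paper from \cite{BELTriangular2014} without proof, so the only possible comparison is with the argument of that reference; your proposal reproduces its architecture faithfully. The forward implication, the use of Coppel's theorem on the first coordinate, the Brouwer fixed-point argument manufacturing a genuine prime period-two point of $F$ inside the slice $\{\alpha\}\times I^{k-1}$ from a hypothetical period-two orbit of $f_1$, the localization $\omega(\mathbf{x})\subseteq\{p\}\times I^{k-1}$, and the inductive restriction to the invariant fiber are all correct and are essentially the steps taken there.

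The genuine gap is the one you flag yourself: the passage from ``every $\widetilde F$-orbit in $L_p$ converges to a fixed point'' to ``$\omega(\mathbf{x})$ is a singleton.'' This is not a finishing touch but the precise place where the hypothesis of local stability on $W^c_{loc}$ is indispensable --- for triangular maps the equality of periodic and fixed points alone does \emph{not} force convergence of all orbits, which is why the hypothesis appears in the statement at all --- and your sketch does not close it for two reasons. First, the fixed point $w\in\omega(\mathbf{x})$ you produce may have $\sigma^u\neq\emptyset$; local stability of $F|_{W^c_{loc}(w)}$ then does not make $w$ Lyapunov stable for $F$, so the standard lemma ``an internally chain transitive set meeting the basin of a stable fixed point equals that point'' is unavailable, and nothing in the proposal prevents the orbit from recurrently escaping along $W^u_{loc}(w)$. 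Ruling that out is where the absence of period-two orbits must be used a second time, through Kloeden's extension of Sharkovsky's theorem to triangular maps (explicitly invoked by the paper and by \cite{BELTriangular2014}, and never invoked in your proposal), which upgrades ``no prime period two'' to ``no periodic points of any prime period greater than one'' and thereby excludes the recurrence structures that an escape along the unstable manifold would create. Second, even when $\sigma^u=\emptyset$, the phrase ``a little extra care about sufficiently fine pseudo-orbits'' is exactly the content of the missing lemma rather than a routine verification: one must combine the $W^s\oplus W^c$ splitting at $w$ with the assumed stability of $F|_{W^c_{loc}(w)}$ to build arbitrarily small positively invariant neighborhoods of $w$, and only then does chain transitivity collapse $\omega(\mathbf{x})$ to $\{w\}$. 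As written, the proposal sets up the induction correctly but leaves its decisive step unproved.
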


An important consequence of this theorem is the following corollary in global stability:

\begin{corollary}[\cite{BELTriangular2014}]
Suppose that the triangular map $F$ has a unique fixed point $\mathbf{x}^*$ and $\mathbf{x}^*$ is stable for $F|_{W^c_{loc} (\mathbf{x}^*)}$.  Then $\mathbf{x}^*$ is globally asymptotically stable if and only if $F$ has no periodic orbits of prime period two.
\label{cor_main_th_BEL}
\end{corollary}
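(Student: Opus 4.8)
The plan is to derive Corollary~\ref{cor_main_th_BEL} directly from Theorem~\ref{main_th_BEL}, treating it as essentially a packaging result: the hard work is already done, and what remains is to match the hypothesis ``unique fixed point'' with the notions of global dynamics and global asymptotic stability. First I would prove the forward direction. Assume $F$ has a unique fixed point $\mathbf{x}^*$, that $\mathbf{x}^*$ is stable for $F|_{W^c_{loc}(\mathbf{x}^*)}$, and that $F$ has no periodic orbits of prime period two. By Theorem~\ref{main_th_BEL}, every orbit in $I^k$ converges to a fixed point; since $\mathbf{x}^*$ is the only fixed point, every orbit converges to $\mathbf{x}^*$, which establishes that $\mathbf{x}^*$ is globally attracting. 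It then remains to upgrade global attractivity to global asymptotic stability by showing that $\mathbf{x}^*$ is (Lyapunov) stable. This is the one genuine subtlety, and I expect it to be the main obstacle: global attractivity alone does not imply stability in general, so one must exploit the compactness of $I^k$ together with the structure at hand. The argument I would use is the standard one for compact spaces: since $I^k$ is compact and $F$ is continuous, if $\mathbf{x}^*$ were unstable one could extract a sequence of points $\mathbf{x}_0^{(j)} \to \mathbf{x}^*$ whose orbits leave a fixed $\epsilon$-ball, pass to convergent subsequences of the ``escape'' iterates, and produce a point $z \neq \mathbf{x}^*$ whose full orbit closure stays away from $\mathbf{x}^*$ — but every orbit converges to $\mathbf{x}^*$, a contradiction. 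Alternatively, I would invoke stability of $\mathbf{x}^*$ on the center manifold (hypothesis) together with the stable/unstable manifold structure: uniqueness of the fixed point forces $W^u_{loc}(\mathbf{x}^*)$ to be contained in the basin and rules out the kind of homoclinic escape that would destroy stability, and then the local decomposition $\R^k = E^s \oplus E^c \oplus E^u$ with stability on $E^c$ yields local stability of $\mathbf{x}^*$ as a fixed point of $F$.

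Next I would prove the converse. Assume $\mathbf{x}^*$ is globally asymptotically stable. I must show $F$ has no periodic orbit of prime period two. Suppose, for contradiction, that $\{\mathbf{p}, \mathbf{q}\}$ is such an orbit with $\mathbf{p} \neq \mathbf{q}$, $F(\mathbf{p}) = \mathbf{q}$, $F(\mathbf{q}) = \mathbf{p}$. Then the orbit of $\mathbf{p}$ is the two-point set $\{\mathbf{p},\mathbf{q}\}$, which does not converge to $\mathbf{x}^*$ (indeed it does not converge at all), contradicting global attractivity. Hence no such orbit exists. This direction is immediate and requires no hypotheses beyond the definition.

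Finally I would note the logical structure explicitly so the reader sees why both hypotheses of Theorem~\ref{main_th_BEL} are in force: the triangularity and $C^1$ regularity of $F$ are inherited directly, and the center-manifold stability hypothesis in the corollary is literally the same as the one in the theorem, now required only at the single fixed point $\mathbf{x}^*$. The only place where ``unique fixed point'' does real work is in converting ``every orbit converges to \emph{a} fixed point'' into ``every orbit converges to \emph{the} fixed point.'' I expect the writeup to be short — a paragraph for each direction — with the lone delicate point being the passage from global attraction to Lyapunov stability, which I would handle via the compactness argument sketched above (or cite a standard reference, e.g.\ \cite{ElaydiDC}, for the fact that on a compact metric space a globally attracting fixed point of a continuous map which is additionally stable on its center manifold is stable).
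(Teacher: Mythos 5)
This corollary is quoted from \cite{BELTriangular2014} and the present paper gives no proof of it, so there is nothing in-text to compare against; judging your proposal on its own terms, the overall strategy (feed the hypotheses into Theorem~\ref{main_th_BEL}, use uniqueness to identify the limit, handle the converse by noting a $2$-cycle cannot converge) is the right one, and the converse direction is complete as written. The gap is in the step you yourself flag as delicate: upgrading global attractivity to Lyapunov stability. The ``standard compactness argument'' you sketch does not work. If $\mathbf{x}^*$ were unstable you can indeed extract escape times $n_j\to\infty$ and a limit point $z$ with $\|z-\mathbf{x}^*\|\geq\epsilon$ admitting a complete negative orbit in $\overline{B_\epsilon(\mathbf{x}^*)}$, but this yields no contradiction with ``every orbit converges to $\mathbf{x}^*$'': the forward orbit of $z$ is still free to converge to $\mathbf{x}^*$ after a macroscopic excursion. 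Indeed there are smooth maps of a compact disk in $\mathbb{R}^2$ with a globally attracting fixed point that is \emph{not} stable (homoclinic ``petal'' configurations), so no argument using only compactness and global attraction can close this step; your fallback citation of such a ``standard fact'' would be citing something false.

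What actually closes the gap is the structure you only gesture at in your alternative. For a triangular map the eigenvalues of $JF(\mathbf{x}^*)$ are the diagonal partials $\partial f_j/\partial x_j$ at $\mathbf{x}^*$. The hypotheses ``no prime period two orbits'' and ``unique fixed point,'' pushed through Coppel's theorem fiber by fiber (as in the proof of Theorem~\ref{main_th_BEL}), force each one-dimensional fiber map to have all orbits converging to its unique fixed point, which rules out $|\lambda|>1$ for every eigenvalue; hence $E^u=\{0\}$. With the unstable spectrum empty, the center manifold reduction principle says that stability of $\mathbf{x}^*$ for $F|_{W^c_{loc}(\mathbf{x}^*)}$ (the explicit hypothesis of the corollary) implies Lyapunov stability of $\mathbf{x}^*$ for $F$. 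That, combined with the global attraction you correctly derived from Theorem~\ref{main_th_BEL}, gives global asymptotic stability. You should replace the compactness argument with this chain; as written, your first (and preferred) route for the forward direction would fail.
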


Corollary \ref{cor_main_th_BEL}  play a paramount importance in certain models. Particularly, in population dynamics, when we want to show that local stability implies global stability. Notice that in population dynamics, the domain is the nonnegative orthant $\mathbb{R}_+^k$, the extra assumption that all orbits are bounded is needed and the results  remain valid.

The preceding results used the fact that the one-dimensional Sharkovsky's Theorem\footnote{Sharkovsky's Theorem was established in 1964 by Alexander Sharkovsky in Russian \cite{Shark1964}, and later	translated to English by J. Tolosa in 1995 \cite{Shark1995}. It has  played a role of paramount importance  in the dynamics of one-dimensional maps.} holds for $k-$dimensional triangular maps as was shown by Kloeden~\cite{Klo1979}.

\section{Periodic triangular maps}
In this section, we extend the results presented in the previous section to periodic triangular non-autonomous models. We begin by presenting the principal concepts and notations.

Let us consider a triangular non-autonomous discrete difference equation given by
\begin{equation}
X_{n+1}=F_n(X_n),\label{TDDS}
\end{equation}
where $X_n\in I^k$ and the sequence of triangular maps $F_n=(f_{1,n},f_{2,n},\ldots,f_{k,n})$ are of class $C^1$ of the form
\begin{eqnarray*}
F_n(X_n)&=&F_n(x_{1,n},x_{2,n},\ldots,x_{k,n})\\
&=&(f_{1,n}(x_{1,n}),f_{2,n}(x_{1,n},x_{2,n}),\ldots,f_{k,n}(x_{1,n},x_{2,n},\ldots,x_{k,n})).
\end{eqnarray*}

The orbit of an initial point $X_0=(x_{1,0},x_{2,0},\ldots,x_{k,0})$ in the System (\ref{TDDS}) is given by $O(X_0)=\{X_0,X_1,X_2,\ldots\}$ where
\begin{eqnarray*}
X_1&=&(x_{1,1},x_{2,1},\ldots,x_{k,1})=F_0((x_{1,0},x_{2,0},\ldots,x_{k,0}))\\
&=&(f_{1,0}(x_{1,0}),f_{2,0}(x_{1,0},x_{2,0}),\ldots,f_{k,0}(x_{1,0},x_{2,0},\ldots,x_{k,0})),
\end{eqnarray*}

\begin{eqnarray*}
	X_2&=&(x_{1,2},x_{2,2},\ldots,x_{k,2})=F_1\circ F_0((x_{1,0},x_{2,0},\ldots,x_{k,0}))\\
	&=&(f_{1,1}\left[f_{1,0}(x_{1,0})\right],f_{2,1}\left[f_{1,0}(x_{1,0}),f_{2,0}(x_{1,0},x_{2,0})\right],\ldots,\\
	&&f_{k,1}\left[f_{1,0}(x_{1,0}),f_{2,0}(x_{1,0},x_{2,0}),\ldots,f_{k,0}(x_{1,0},x_{2,0},\ldots,x_{k,0})\right]),
\end{eqnarray*}

\begin{eqnarray*}
	X_3&=&(f_{1,2}\{f_{1,1}\left[f_{1,0}(x_{1,0})\right]\},\ldots,\\
	&&f_{k,2}\left\{f_{k,1}\left[f_{1,0}(x_{1,0}),f_{2,0}(x_{1,0},x_{2,0}),\ldots,f_{k,0}(x_{1,0},x_{2,0},\ldots,x_{k,0})\right]\right\}),
\end{eqnarray*}
etc..

Define the triangular composition map as follows
\begin{eqnarray*}
\Phi_n(X):&=&\Phi_n(x_1,\ldots,x_k)=F_{n-1}\circ \ldots\circ F_1\circ F_0(X)\\&=&(\phi_{1,n}(x_1),\phi_{2,n}(x_1,x_2),\ldots,\phi_{k,n}(x_1,\ldots,x_k))
\end{eqnarray*}
where $\phi_{j,n}(x_1,\ldots, x_j)$, $j=1,2,\ldots, k$, is the composition obtained in each coordinate as illustrated before. Notice that we are assuming that the composition is well defined in some subset of $I^k$. Hence, the dynamics of the system is completely determined by the triangular composition map $\Phi_n(X)$.

Now let us consider the periodicity of Equation (\ref{TDDS}) by considering  $p_i$ to be the minimal period of the individual maps $f_{j,i}$, i.e., for each $j=1,2,\ldots, k$ we have $f_{j,n+p_i}=f_{j,n}$ for all $n=0,1,2,\ldots$. Under this scenario we have $F_{n+p}=F_n$, for all $n=0,1,2,\ldots$, where $$p=lcm\{p_1,p_2,\ldots,p_k\}$$ is minimal. Hence, Equation (\ref{TDDS}) is $p-$periodic and its dynamics is completely determined by the composition triangular map $\Phi_p$. 

Notice that we can have the following three scenarios in the dynamics of the equation (\ref{TDDS}):
\begin{enumerate}
	\item \textbf{Common fixed point.} If $x^*$ is a common fixed point of the sequence of maps $F_n$ (and obviously a fixed point of the $p-$periodic equation (\ref{TDDS}), then $X^*$ is also a fixed point of the composition triangular map $\Phi_{q,i}$;
	\item \textbf{Cycle.} If there exists $q\in\mathbb{Z}^+$ such that $mq=p$ for some $m=2,3\ldots$, then the $p-$periodic Equation (\ref{TDDS}) has a $q-$periodic cycle which is a $mq-$periodic cycle of the triangular composition map $\Phi_{p,i}$. This cycle is denoted by $$\mathcal{C}_q=\{\overline{X}_{i},\overline{X}_{i+1},\ldots,\overline{X}_{i+q-1}\}.$$ Notice that $\overline{X}_i$, $i=0,1,\ldots,q-1$  is fixed point of the following compositions: 
	\begin{eqnarray*}
	\Phi_{q_1,i}(\overline{X}_i)&=&F_{i+q-1}\circ\ldots\circ F_{i+1}\circ F_i(\overline{X}_i),\\
	\Phi_{q_2,i}(\overline{X}_i)&=&F_{i+2q-1}\circ\ldots\circ F_{i+q+1}\circ F_{i+q}(\overline{X}_i),\\
	&&\hspace{2cm}\ldots\\
	\Phi_{q_m,i}(\overline{X}_i)&=&F_{i+mq-1}\circ\ldots\circ F_{i+(m-1)q+1}\circ F_{i+(m-1)q}(\overline{X}_i).
	\end{eqnarray*}
	\item \textbf{Geometric cycle.} If the triangular composition map $\Phi_{p,i}$ has a fixed point which is not in the conditions described above, then it is a $p-$periodic cycle of the $p-$periodic Equation (\ref{TDDS}). In this case this cycle is called as a geometric cycle and denoted by  $$\mathcal{C}_p=\{\overline{X}_{i},\overline{X}_{i+1},\ldots,\overline{X}_{i+p-1}\}.$$ Notice that $\overline{X}_i$, $i=0,1,2\ldots,p-1$, is a fixed point of the following compositions
	\begin{eqnarray}
	\Phi_{p,i}(\overline{X}_i)=F_{i+p-1}\circ\ldots\circ F_{i+1}\circ F_{i}(\overline{X}_i).
	\end{eqnarray}
	Here, when $i=0$ we denote $\Phi_{p,0}$ by $\Phi_p$ as described above.
\end{enumerate}

Notice that, since we are dealing with global dynamics, in the last case it is enough to study the qualitative properties of the fixed point $\overline{X}_0$ of the full composition $\Phi_p$ since it determines completely the dynamics of the $p-$periodic equation (\ref{TDDS}). The other cases follow in a similar fashion.

We point out that the non-autonomous periodic difference equation (\ref{TDDS})
does not generate a discrete (semi)dynamical system \cite{ESUAE} as it may
not satisfy the (semi)group property. One of the most effective ways of
converting the non-autonomous difference equation (\ref{TDDS}) into a
genuine discrete (semi)dynamical system is the construction of the
associated skew-product system as described in a series of papers by Elaydi
and Sacker \cite{ElSa2005, ElSa2005a, ESUAE, ES2006}. It is noteworthy to
mention that this idea was originally used to study non-autonomous
differential equations by Sacker and Sell \cite{SS1977}.

We are now in position to extend the results of the previous section to the $p-$periodic equation (\ref{TDDS}). Recall that we are assuming that $F_{n+p}=F_n$ for all $n=0,1,2,\dots$, in the described conditions.

 \begin{theorem}\label{main_th_FP}
 	Let $\mathbf{X}^*$ to be a common fixed point of the periodic sequence of continuous $C^1$ triangular maps $F_n$ with $F_n: I^k \rightarrow I^k$ such that each fixed point $\mathbf{X}^*$ is a locally stable fixed point of $F_n|_{W^c_{loc} (\mathbf{x}^*)}$. Then every orbit in $I^k$ of the $p-$periodic Equation (\ref{TDDS}) converge to a  fixed point if and only if there are no periodic orbits of prime period $2$.
 \end{theorem}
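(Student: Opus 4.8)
The plan is to reduce Theorem \ref{main_th_FP} to Theorem \ref{main_th_BEL} by applying the latter to the composition triangular map $\Phi_p=F_{p-1}\circ\cdots\circ F_1\circ F_0$. First I would record that $\Phi_p\colon I^k\to I^k$ is continuous, of class $C^1$, and triangular, since a composition of triangular maps is again triangular (its $j$-th coordinate $\phi_{j,p}$ depends only on $x_1,\dots,x_j$, as in the definition of the triangular composition map). Next I would set up the dictionary between the $p$-periodic non-autonomous Equation (\ref{TDDS}) and the autonomous map $\Phi_p$: writing the orbit of $X_0$ as the union of the $p$ subsequences $\{X_{np+r}\}_{n\ge 0}$, $r=0,1,\dots,p-1$, and using the $p$-periodicity of the $F_n$ to get $X_{np+r}=\Phi_r\bigl(\Phi_p^{\,n}(X_0)\bigr)$, where $\Phi_r=F_{r-1}\circ\cdots\circ F_0$ and $\Phi_0=\mathrm{id}$, one sees that the orbit of $X_0$ under (\ref{TDDS}) converges to a common fixed point $\mathbf{X}^*$ exactly when $\Phi_p^{\,n}(X_0)\to\mathbf{X}^*$ (each $\Phi_r$ fixes $\mathbf{X}^*$, as it is common). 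In the same way, a periodic orbit of prime period $2$ of (\ref{TDDS}) is precisely a $2$-cycle $\{A,B\}$ with $A\ne B$ of the map $\Phi_p$. Thus the asserted equivalence for (\ref{TDDS}) is exactly the conclusion of Theorem \ref{main_th_BEL} applied to $\Phi_p$, \emph{provided} $\Phi_p$ satisfies the hypothesis of that theorem.

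The heart of the proof is therefore to verify that each fixed point $\mathbf{X}^*$ of $\Phi_p$, which in the present setting is a common fixed point of the $F_n$, is a locally stable fixed point of $\Phi_p|_{W^c_{loc}(\mathbf{X}^*)}$. The basic tool is that at a common fixed point the chain rule gives
\[
J\Phi_p(\mathbf{X}^*)=JF_{p-1}(\mathbf{X}^*)\,JF_{p-2}(\mathbf{X}^*)\cdots JF_0(\mathbf{X}^*),
\]
a product of (block) lower-triangular matrices, hence itself (block) lower triangular, whose $j$-th diagonal entry is the product over $n=0,\dots,p-1$ of the $j$-th diagonal entries of the $JF_n(\mathbf{X}^*)$. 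Consequently a coordinate direction $j$ belongs to the center subspace of $\Phi_p$ at $\mathbf{X}^*$ precisely when that product of diagonal entries has modulus $1$, which lets one compare $W^c_{loc}(\mathbf{X}^*)$ for $\Phi_p$ with the center manifolds of the individual $F_n$.

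I expect this transfer of the center-manifold stability hypothesis from the individual maps $F_n$ to their composition $\Phi_p$ to be the main obstacle. When every center direction of $\Phi_p$ at $\mathbf{X}^*$ is already a center direction of each factor, the local center dynamics of $\Phi_p$ is the composition of the individually controlled center dynamics, and stability on $W^c_{loc}(\mathbf{X}^*)$ is inherited at once. The delicate situation is when expansion along a coordinate in one factor is exactly cancelled by contraction in another, so that $\Phi_p$ acquires a center direction built out of non-central ones; here I would not argue at the level of $\Phi_p$ directly but instead mimic the inductive proof of Theorem \ref{main_th_BEL} from \cite{BELTriangular2014}: induct on the dimension $k$, treating the first coordinate via Coppel's one-dimensional Theorem \ref{thm_no_periodic_points} (which is unconditional), and then analysing the one-dimensional fiber maps over the base fixed points --- on each fiber the relevant composition is again a continuous self-map of $I$, so Coppel's theorem applies again --- and invoking the per-map center-manifold stability precisely to prevent the fiber component from escaping while the base component converges to a center-type fixed point.

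Once the hypothesis of Theorem \ref{main_th_BEL} is established for $\Phi_p$, that theorem yields that every orbit of $\Phi_p$ in $I^k$ converges to a fixed point if and only if $\Phi_p$ has no periodic orbit of prime period two; translating back through the dictionary of the first paragraph gives Theorem \ref{main_th_FP}. Finally, applying Corollary \ref{cor_main_th_BEL} to $\Phi_p$ yields the companion global stability statement: if the periodic sequence $F_n$ has a unique common fixed point $\mathbf{X}^*$, stable on each $W^c_{loc}(\mathbf{X}^*)$, then $\mathbf{X}^*$ is globally asymptotically stable for (\ref{TDDS}) if and only if there is no periodic orbit of prime period two.
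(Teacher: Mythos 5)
Your proposal follows essentially the same route as the paper: reduce the $p$-periodic equation to the autonomous dynamics of the composition $\Phi_p$ (equivalently, to the subsequences $X_{np+r}$ living in the fibers $\mathbb{F}_r$ of the skew-product) and invoke Theorem \ref{main_th_BEL}; the paper's own proof is a single sentence asserting that the techniques of \cite{BELTriangular2014} carry over fiber by fiber, so your dictionary between orbits (and $2$-cycles) of Equation (\ref{TDDS}) and those of $\Phi_p$ is more explicit than anything actually written in the paper. Where you add genuine content is in isolating the one real mathematical issue: Theorem \ref{main_th_FP} assumes local stability of $\mathbf{X}^*$ for each $F_n|_{W^c_{loc}(\mathbf{X}^*)}$, whereas Theorem \ref{main_th_BEL} applied to $\Phi_p$ requires the corresponding hypothesis for $\Phi_p|_{W^c_{loc}(\mathbf{X}^*)}$, and, as you observe, the center subspace of $J\Phi_p(\mathbf{X}^*)=JF_{p-1}(\mathbf{X}^*)\cdots JF_0(\mathbf{X}^*)$ may contain directions along which no individual $F_n$ is central (expansion in one factor cancelled by contraction in another), so stability of each factor on its own center manifold does not formally compose. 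The paper's proof does not address this at all; your proposed repair --- rerunning the inductive, Coppel-based argument of \cite{BELTriangular2014} coordinate by coordinate and using the per-map hypothesis on the one-dimensional fiber maps --- is the right idea but is left at the same level of sketch as the published one-sentence proof, so neither text fully closes that step. In short: same approach as the paper, with your version making explicit (without completely resolving) a hypothesis-transfer gap that the paper silently skips.
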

\begin{proof}
	The proof follows the techniques employed in the proof of Theorem \ref{main_th_BEL} presented in \cite{BELTriangular2014} by considering that in each fiber $\mathbb{F}_i$, $i=0,1,2\ldots,p-1$ (here each fiber is a simple copy of our space $I^k$), there exists a subsequence $n_j$, $j=1,2,\ldots, k$ where the dynamics holds in each coordinate map $f_{j,i}$.
\end{proof}

In the case that Equation (\ref{TDDS}) possesses a cycle $\mathcal{C}_q$, we follow a similar idea as before, considering that in each fiber $\mathbb{F}_{i+l}$, $i=0,1,\ldots, q-1$ and $l=0,q,2q,\ldots,mq-1$, there exists a subsequence $n_j$, $j=1,2,\ldots, k$, where the dynamics holds in each coordinate map $f_{j,i}$.

\begin{theorem}\label{main_th_ICy}
	Let $mq=p$ and for $j=0,1,\ldots,m$ assume that $\Phi_{qj,i}: I^k \rightarrow I^k$ is a continuous $C^1$ triangular map such that each fixed point $\overline{X}_i$, $i=0,1,\ldots, q-1$, of $\Phi_{qj,i}$ is a locally stable fixed point of $\Phi_{qj,i}|_{W^c_{loc} (\overline{X}_i)}$.	 Then every orbit in $I^k$ of the $p-$periodic Equation (\ref{TDDS}) converge to a  $q-$periodic cycle $\mathcal{C}_q=\{\overline{X}_i,\overline{X}_{i+1},\ldots,\overline{X}_{i+q-1}\}$ if and only if there are no periodic orbits of prime period greater than $q$.
\end{theorem}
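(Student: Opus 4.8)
The plan is to reduce the statement to Theorem~\ref{main_th_FP} (and hence to Theorem~\ref{main_th_BEL} and Coppel's Theorem~\ref{thm_no_periodic_points}), by passing from the individual maps $F_n$ to the length-$q$ block compositions. Let $\overline{X}_{n+1}=F_n(\overline{X}_n)$ be the orbit tracing out the cycle $\mathcal{C}_q$. Because $mq=p$, the $q$-periodicity of the cycle gives $\Phi_{q_j,0}(\overline{X}_0)=\overline{X}_{jq}=\overline{X}_0$, so $\overline{X}_0$ is a \emph{common} fixed point of the $m$ block maps $\Phi_{q_1,0},\ldots,\Phi_{q_m,0}$, each of which is, by hypothesis, a continuous $C^1$ triangular self-map of $I^k$. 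Hence $G_0:=\Phi_{q_1,0},\ldots,G_{m-1}:=\Phi_{q_m,0},G_m:=G_0,\ldots$ is an $m$-periodic sequence of continuous $C^1$ triangular maps with common fixed point $\overline{X}_0$, and the standing hypothesis of Theorem~\ref{main_th_ICy} — that each $\overline{X}_i$ is locally stable for every $\Phi_{q_j,i}|_{W^c_{loc}(\overline{X}_i)}$ — is exactly the hypothesis that Theorem~\ref{main_th_FP} requires of this block system. Since the orbit of any initial point $X_0$ of (\ref{TDDS}) induces, via $Y_n:=X_{nq}$, an orbit of the block system, Theorem~\ref{main_th_FP} applies and gives: every orbit of the block system converges to a common fixed point if and only if the block system has no periodic orbit of prime period two. (Equivalently, one may argue fiberwise, as in the proof of Theorem~\ref{main_th_FP}, by grouping the $p$ fibers of (\ref{TDDS}) into the $q$ residue classes modulo $q$ and, inside each fiber, extracting in every coordinate $j=1,\ldots,k$ a subsequence $n_j$ along which the one-dimensional Coppel argument applies to $f_{j,i}$.)

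It then remains to transport this back to (\ref{TDDS}), in two parts. The easy implication of the theorem is immediate by contradiction: a periodic orbit of prime period $r>q$ is a finite invariant set that is not a $q$-cycle, hence cannot coincide with its own $\omega$-limit set, so if every orbit converges to a $q$-periodic cycle no such orbit can exist. For the converse, the first point is that an orbit of the block system converges to a common fixed point $\overline{X}_0$ if and only if the corresponding orbit of (\ref{TDDS}) converges to $\mathcal{C}_q$; this follows from $Y_n=X_{nq}\to\overline{X}_0$ by applying the (finitely many) continuous intermediate maps $F_n$ and using the $q$-periodicity of $\mathcal{C}_q$. The second, and decisive, point is that the absence of periodic orbits of prime period greater than $q$ for (\ref{TDDS}) corresponds to the absence of periodic orbits of prime period two for the block system, equivalently to the absence of $2$-cycles of the full composition $\Phi_p=G_{m-1}\circ\cdots\circ G_0$. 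With both in hand, Theorem~\ref{main_th_FP} forces every orbit of (\ref{TDDS}) to converge to a $q$-periodic cycle, which is the claim.

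I expect the main obstacle to be precisely this second point: one has to make explicit how the prime period of a periodic orbit of (\ref{TDDS}) — most transparently, of the associated skew-product system — relates to the prime period of the orbit it induces on a fiber under $\Phi_p$, and then to verify, using the divisor relations between $p$ and $q$ together with the center-manifold local stability assumption, that ``prime period greater than $q$ for (\ref{TDDS})'' and ``prime period two for the block system'' single out the same orbits. The remaining ingredients are routine: that each $G_j$ is genuinely a continuous $C^1$ triangular self-map of $I^k$ (via the block-lower-triangular form of the Jacobian of a composition of triangular maps, as in \cite{BELTriangular2014}), and the continuity argument of the second paragraph. Once the period bookkeeping is settled, what is left is Theorem~\ref{main_th_FP} applied verbatim.
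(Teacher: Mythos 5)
Your reduction to the block system is sound and is in the spirit of what the paper actually offers for this statement: the paper gives no displayed proof of Theorem~\ref{main_th_ICy}, only the one-sentence remark preceding it (repeat the fiber/subsequence argument of Theorem~\ref{main_th_FP} on the fibers $\mathbb{F}_{i+l}$), so packaging the length-$q$ blocks $\Phi_{q_1,i},\ldots,\Phi_{q_m,i}$ into an $m$-periodic system with common fixed point $\overline{X}_i$ and invoking Theorem~\ref{main_th_FP} is a legitimate, and in fact more explicit, version of the same idea. The genuine gap is precisely the step you name and then leave open: the asserted equivalence between ``(\ref{TDDS}) has no periodic orbits of prime period greater than $q$'' and ``the block system has no periodic orbits of prime period two.'' This is not deferrable bookkeeping; it is where the whole content of the biconditional lives, and as stated it is not correct without further qualification. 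Take $m=2$, $q=6$, $p=12$, and suppose (\ref{TDDS}) admits an orbit of prime period $4$. Then $4<q$, so the condition ``no periodic orbits of prime period greater than $q$'' is not violated by it; yet $Y_n:=X_{6n}$ gives $Y_0=X_0$, $Y_1=X_6=X_2\neq X_0$, $Y_2=X_{12}=X_0$, a prime period two orbit of the block system, and the period-$4$ orbit certainly does not converge to a $6$-cycle. So either the two ``no periodic orbit'' conditions are not equivalent, or the notion of prime period of a non-autonomous orbit must be pinned down (e.g., via the skew-product and the period of the induced orbit of $\Phi_p$ on a single fiber) more carefully than either you or the paper does.

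A complete argument therefore has to (i) determine which prime periods $r$ a periodic orbit of the $p$-periodic equation can have and how $r$ relates to the period of the induced orbit of the block maps on the fiber over $0$ (this involves $\mathrm{lcm}(r,q)/q$, not simply ``two''), and (ii) verify that the set of periods excluded by ``$r>q$'' is exactly the set obstructing convergence to a $q$-cycle. Your forward implication is essentially fine (for prime period $r>q$ the subsequence $X_{nq+i}$ is periodic and nonconstant in $n$, hence not convergent), and the continuity argument transporting convergence of $Y_n=X_{nq}$ back to convergence of $X_n$ to $\mathcal{C}_q$ is routine, as you say. But until (i)--(ii) are carried out, the decisive reverse implication is unproved: you have correctly located the difficulty without resolving it.
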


In the case of a geometric $p-$periodic cycle $\mathcal{C}_p$ we have the following result.

\begin{theorem}\label{main_th_PTM}
	Let $\Phi_{p,i}: I^k \rightarrow I^k$ be a continuous $C^1$ triangular map such that each fixed point $\overline{X}_i$, $i=0,1,\ldots, p-1$ of $\Phi_{p,i}$ is a locally stable fixed point of $\Phi_{p,i}|_{W^c_{loc} (\overline{X}_i)}$. Then every orbit in $I^k$ of the $p-$periodic Equation (\ref{TDDS}) converge to a geometric $p-$periodic cycle $\mathcal{C}_p=\{\overline{X}_i,\overline{X}_{i+1},\ldots,\overline{X}_{i+p-1}\}$ if and only if there are no periodic orbits of prime period greater that $p$.
\end{theorem}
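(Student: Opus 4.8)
The plan is to reduce Theorem \ref{main_th_PTM} to the autonomous result Theorem \ref{main_th_BEL} applied to the composition map $\Phi_p = \Phi_{p,0}$, and then transport the conclusion back to the original $p$-periodic system. First I would observe that $\Phi_p : I^k \to I^k$ is itself a continuous $C^1$ triangular map (composition of triangular maps is triangular, since $\phi_{j,p}$ depends only on $x_1,\dots,x_j$), so the hypothesis that each fixed point $\overline{X}_0$ of $\Phi_p$ is locally stable for $\Phi_p|_{W^c_{loc}(\overline{X}_0)}$ is precisely the standing hypothesis of Theorem \ref{main_th_BEL}. Hence every orbit of $\Phi_p$ in $I^k$ converges to a fixed point of $\Phi_p$ if and only if $\Phi_p$ has no periodic orbit of prime period two.

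Next I would translate both sides of this equivalence into the language of the non-autonomous system \eqref{TDDS}. On the dynamics side: an orbit $\{X_n\}$ of \eqref{TDDS} decomposes, along the residues $i=0,1,\dots,p-1$, into the $p$ subsequences $\{X_{np+i}\}_n$, and $\{X_{np+i}\}_n$ is exactly an orbit of the autonomous map $\Phi_{p,i}$ started at $X_i$. Since each $\Phi_{p,i}$ satisfies the hypotheses (each is a triangular $C^1$ map and the stability assumption is imposed for all $i$), Theorem \ref{main_th_BEL} applies in every fiber $\mathbb{F}_i$, and $X_{np+i}\to\overline{X}_i$ for each $i$ is equivalent to the whole orbit $\{X_n\}$ converging to the $p$-periodic cycle $\mathcal{C}_p=\{\overline{X}_0,\dots,\overline{X}_{p-1}\}$. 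One must check the compatibility of the fiberwise limits — that $F_i(\overline{X}_i)=\overline{X}_{i+1}$ — which follows from continuity of $F_i$ applied to $X_{np+i}\to\overline{X}_i$ together with $X_{np+i+1}=F_i(X_{np+i})$, and from the fact that $\overline{X}_{i+1}=F_i(\overline{X}_i)$ is forced because $\Phi_{p,i+1}$ and $\Phi_{p,i}$ are conjugate via $F_i$ (cyclic permutation of the composition), so their fixed points correspond under $F_i$.

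On the periodic-orbit side: I would spell out the dictionary between prime-period-$r$ orbits of the non-autonomous system \eqref{TDDS} and periodic orbits of the composition maps. A periodic orbit of \eqref{TDDS} of minimal period $r$ with $p \mid r$, say $r = sp$, projects in fiber $\mathbb{F}_0$ to a periodic orbit of $\Phi_p$ of prime period $s$; conversely a prime-period-$s$ orbit of $\Phi_p$ lifts to a period-$sp$ orbit of \eqref{TDDS}. Orbits of \eqref{TDDS} whose minimal period does not exceed $p$ are the fixed points of $\Phi_{p}$ (the cycle $\mathcal{C}_p$ itself and any other $p$-cycles or common fixed points), which correspond to period-one points of $\Phi_p$ and so are not the obstruction. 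Therefore "no periodic orbit of \eqref{TDDS} of prime period greater than $p$" is equivalent to "$\Phi_p$ has no periodic orbit of prime period $\ge 2$", which by Kloeden's extension of Sharkovsky's theorem to triangular maps (cited in the excerpt) is in turn equivalent to "$\Phi_p$ has no prime-period-two orbit" — exactly the condition in Theorem \ref{main_th_BEL}. Chaining the three equivalences gives the theorem.

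I expect the main obstacle to be the bookkeeping in the second paragraph: making the fiberwise application of Theorem \ref{main_th_BEL} rigorous and, in particular, verifying that the local-stability hypothesis on $\Phi_{p,i}|_{W^c_{loc}(\overline{X}_i)}$ for every $i$ is genuinely what is needed (as opposed to only $i=0$), and that the center manifolds in different fibers match up under the conjugating maps $F_i$ — this uses that $JF_i(\overline{X}_i)$ conjugates $J\Phi_{p,i}(\overline{X}_i)$ to $J\Phi_{p,i+1}(\overline{X}_{i+1})$, so the splitting $E^s\oplus E^c\oplus E^u$ is carried across and the restricted stability is preserved. The period-counting dictionary is the other place to be careful, since one must rule out that a short periodic orbit of \eqref{TDDS} could masquerade as a long one under composition; this is handled by the standard relation between the minimal period of a point under $\Phi_p$ and under the non-autonomous skew-product, together with the observation that $p\mid r$ for every periodic orbit of a $p$-periodic equation. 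Everything else is a direct invocation of Theorem \ref{main_th_BEL} and Corollary-style consequences already established in the autonomous setting.
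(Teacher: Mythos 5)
Your proposal follows essentially the same route as the paper's own proof: apply Theorem \ref{main_th_BEL} fiberwise to the composition maps $\Phi_{p,i}$ on the fibers $\mathbb{F}_i$, and then translate both the convergence statement and the period count back to the $p$-periodic equation (\ref{TDDS}). Your write-up is in fact more detailed than the paper's, which asserts the fiberwise reduction and the period dictionary without checking the compatibility $F_i(\overline{X}_i)=\overline{X}_{i+1}$ of the fiberwise limits or invoking Kloeden's Sharkovsky-type result to pass from ``no periodic orbits of prime period $\geq 2$'' down to ``no periodic orbits of prime period two.''
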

\begin{proof}
	Let us consider the autonomous difference equation $X_{n+1}=\Phi_{p,i}(X_n)$, for all $n=0,1,2,\ldots$ and $i=0,1,\ldots, p-1$. Since $\Phi_{p,i}$ is a continuous $C^1$ triangular map whose fixed point $\overline{X}_i$ is locally stable fixed point of $\Phi_{p,i}|_{W^c_{loc} (\overline{X}_i)}$, it follows by Theorem \ref{main_th_BEL} that in each fiber $\mathbb{F}_i$, $i=0,1,\ldots, p-1$, every orbit  of $X_{n+1}=\Phi_{p,i}(X_n)$ converges to a fixed point $\overline{X}_i$ if and only if there are no periodic orbits of prime period $2$. 
	
	Since the fixed point $\overline{X}_i$ of $\Phi_{p,i}$ is a periodic point with period $p$ of  the $p-$periodic equation $X_{n+1}=F_n(X_n)$, i.e., the fixed point $\overline{X}_i$ of $\Phi_{p,i}$ generates a geometric cycle of the form $\mathcal{C}_p=\{\overline{X}_i,\overline{X}_{i+1},\ldots,\overline{X}_{i+p-1}\}$, then the above conclusion is equivalent to say that every orbit in $\mathbb{F}_i$, $i=0,1,\ldots, p-1$ of $X_{n+1}=F_n(X_n)$,  converge to a $p-$periodic point if and only if there are no periodic orbits of prime period greater than $p$. 
\end{proof}

\begin{remark}\label{rGS}
	\begin{enumerate}
		It is clear that: 
		\item If we assume uniqueness of $\mathbf{X}^*$ in Theorem \ref{main_th_FP}, then $\mathbf{X}^*$ is globally asymptotically stable if and only if $X_{n+1}=F_n(X_n)$ has no periodic orbits of prime period two.
		\item If we assume uniqueness of $\mathcal{C}_q$ in Theorem \ref{main_th_ICy}, then $\mathcal{C}_q$ is globally asymptotically stable if and only if the map $\Phi_{qj,i}$, $i=0,1,\ldots, q-1$ and $j=1,2,\ldots,m$, has no periodic orbits of prime period two.
		\item If we assume uniqueness of $\mathcal{C}_p$ in Theorem \ref{main_th_PTM}, then unique $p-$periodic orbit of $x_{n+1}=F_n(X_n)$  is globally asymptotically stable if and only if $\Phi_{p,i}$, $i=0,1,\ldots, p-1$, has no periodic orbits of prime period two.
	\end{enumerate}
\end{remark}

\section{Illustrative examples}\label{examples}
In this section we give some illustrative examples in order to show  the ideas presented in the previous section and the effectiveness of our tools. 
\subsection{Triangular periodic Leslie-Gower competition model}\label{PTLG}
Let us consider the non-autonomous triangular map $F_n: \mathbb{R}_+^2 \rightarrow \mathbb{R}_+^2$ given by
	\begin{equation}\label{eq:Leslie_Gower}
	F_n(x, y)=\left(\frac{\displaystyle \mu K_nx}{\displaystyle K_n+(\mu - 1)x}, \frac{\displaystyle \alpha L_n y}{\displaystyle L_n+(\alpha-1)y+\beta x}\right),
	\end{equation}
	
	\noindent where $\mu, \alpha >1$, $0<\beta<1$ and $K_n, L_n >0$. In order to have periodicity of the equation
	\begin{equation}
	X_{n+1}=F_n(X_n)\label{PEQ}
	\end{equation} 
	let us assume that $K_{n+2}=K_n$ and $L_{n+2}=L_n$, for all $n=0,1,2,\ldots$. Hence, the non-autonomous equation (\ref{PEQ}) is $2-$periodic.
	
	Notice that the orbits of $F_n$ are bounded since $\frac{\mu K_n x}{K_n+(\mu-1)x}<\frac{\mu K_n x}{(\mu-1)x}=\frac{\mu K_n}{\mu-1}$ and $\frac{ \alpha L_n y}{L_n+(\alpha-1)y+\beta x}<\frac{ \alpha L_n y}{ (\alpha-1)y}=\frac{\alpha L_n}{\alpha-1}$, for all $n=0,1,2,\ldots$.
	
	It is a straightforward computation to see that:
	\begin{enumerate}
		\item $O=(0,0)$ is a fixed point of Equation (\ref{PEQ});
		\item $\mathcal{E}_x=\left\{(\overline{x}_0,0),(\overline{x}_1,0)\right\}=\left\{\left(\frac{K_0 K_1 (\mu +1)}{K_0 \mu +K_1},0\right),\left(\frac{K_0 K_1 (\mu +1)}{K_1 \mu +K_0},0\right)\right\}$ is an exclusion $2-$periodic cycle Equation (\ref{PEQ}) on the $x-$axis;
		\item $\mathcal{E}_y=\left\{(0,\overline{y}_1),(0,\overline{y}_2)\right\}=\left\{\left(0,\frac{(\alpha +1) L_0 L_1}{\alpha  L_0+L_1}\right),\left(0,\frac{(\alpha +1) L_0 L_1}{\alpha  L_1+L_0}\right)\right\}$ is an exclusion $2-$periodic cycle of Equation (\ref{PEQ}) on the $y-$axis;
		\item $\mathcal{C}_2=\left\{\left(\overline{x}_0,\overline{Y}_0\right),\left(\overline{x}_1,\overline{Y}_1\right)\right\}$ is a coexistence $2-$periodic cycle of Equation (\ref{PEQ}).
	\end{enumerate}
where $\overline{Y}_0$ and $\overline{Y}_1$ are given by 
\[
\overline{Y}_0=\frac{\left(\alpha ^2-1\right) K_1^2 \mu  L_0 L_1-A K_0^2 -B K_1 K_0 }{(\alpha -1) \left(K_0 \mu +K_1\right) \left(K_0 \left(\beta  K_1 (\mu +1)+\alpha  L_0+L_1\right)+K_1 \mu  \left(\alpha  L_0+L_1\right)\right)}
\]
and 
\[
\overline{Y}_1=\frac{\left(\alpha ^2-1\right) K_1^2 \mu  L_0 L_1-A K_0^2 -B K_1 K_0 }{(\alpha -1) \left(K_1 \mu +K_0\right) \left(K_0 \left(\beta  K_1 (\mu +1)+\mu  \left(\alpha  L_1+L_0\right)\right)+K_1 \left(\alpha  L_1+L_0\right)\right)}
\]
with 
$$A=\beta ^2 K_1^2 (\mu +1)^2+\beta  K_1 (\mu +1) \left(\mu  L_0+L_1\right)-\left(\alpha ^2-1\right) \mu  L_0 L_1$$
and 
$$B=\beta  K_1 (\mu +1) \left(\mu  L_1+L_0\right)-\left(\alpha ^2-1\right) \left(\mu ^2+1\right) L_0 L_1$$

In order to guarantee that $\mathcal{C}_2$ belongs to the first quadrant we assume that
\begin{equation}
\left(\alpha ^2-1\right) K_1^2 \mu  L_0 L_1>A K_0^2 +B K_1 K_0 
\label{cpc}
\end{equation}
Now we will study the properties of the fixed points of the triangular composition map $\Phi_2=F_1\circ F_0$. (As observed before the map $\Phi_{2,1}=F_0\circ F_1$ leads to the same conclusions and will be omitted).

From the above conclusions we see that the map $\Phi_2$ has the fixed points $O=(0,0)$, $\varepsilon_x=(\overline{x}_0,0)$, $\varepsilon_y=(0,\overline{y}_0)$ and $\mathcal{C}^*=(\overline{x}_0,\overline{Y}_0)$.

The Jacobian matrix of $\Phi_2$, $J\Phi_2$, evaluated at the fixed points is 
\[
J\Phi_2(O)=\left(
\begin{array}{cc}
\mu ^2 & 0 \\
0 & \alpha ^2 \\
\end{array}
\right),
\]
\[
J\Phi_2(\varepsilon_x)=\left(
\begin{array}{cc}
\frac{1}{\mu ^2} & 0 \\
0 & \frac{\alpha ^2 \left(\mu  K_0+K_1\right) \left(K_0+\mu  K_1\right) L_0 L_1}{\left(K_1 L_0+K_0 \left(\beta  (\mu +1) K_1+\mu  L_0\right)\right) \left(\mu  K_1 L_1+K_0 \left(\beta  (\mu +1) K_1+L_1\right)\right)} \\
\end{array}
\right),
\]
\[
J\Phi_2(\varepsilon_y)=\left(
\begin{array}{cc}
\mu ^2 & 0 \\
-\frac{(\alpha +1) \beta  \left(\alpha  \mu  L_0^2+\alpha  (\alpha  \mu +1) L_1 L_0+L_1^2\right)}{\alpha ^2 \left(\alpha  L_0+L_1\right){}^2} & \frac{1}{\alpha ^2} \\
\end{array}
\right)
\]
and 
\[
J\Phi_2(\mathcal{C}^*)=\left(
\begin{array}{cc}
\frac{1}{\mu ^2} & 0 \\
------ & \frac{\left(K_1 L_0+K_0 \left(\beta  (\mu +1) K_1+\mu  L_0\right)\right) \left(\mu  K_1 L_1+K_0 \left(\beta  (\mu +1) K_1+L_1\right)\right)}{\alpha ^2 \left(\mu  K_0+K_1\right) \left(K_0+\mu  K_1\right) L_0 L_1} \\
\end{array}
\right)
\]
The origin is a repeller fixed point since the spectrum of the matrix $J\Phi_2(0)$ is outside  the unit disk.  Since $\frac{1}{\alpha}<1<\mu$, it follows that the fixed point $\varepsilon_y$ is a saddle. 

After some algebraic manipulations, it follows from Relation (\ref{cpc}) that 
\[
\frac{\alpha ^2 \left(\mu  K_0+K_1\right) \left(K_0+\mu  K_1\right) L_0 L_1}{\left(K_1 L_0+K_0 \left(\beta  (\mu +1) K_1+\mu  L_0\right)\right) \left(\mu  K_1 L_1+K_0 \left(\beta  (\mu +1) K_1+L_1\right)\right)}>1.
\]
From this we conclude that the exclusion fixed point $\varepsilon_x$ is a saddle since $\frac{1}{\mu ^2}<1$. Moreover, the spectrum of the coexistence fixed point $\mathcal{C}^*$ lie inside the unit disk.

We can also see that the map $\Phi_2(x,y)=(\Phi_{1,2}(x),\Phi_{2,2}(x,y))$ has a periodic orbit of prime period $2$ whenever 
	\[
	\left\{ 
	\begin{array}{l}
	f_{1,1}\circ f_{1,0}\circ f_{1,1}\circ f_{1,0}(x)=x \\ 
	f_{2,1}\left\{f_{1,0}(x),f_{2,0}\left[f_{1,1}(x),f_{2,1}(f_{1,0}(x),f_{2,0}(x,y))\right]\right\}=y
	\end{array}
	\right. ,
	\]
	where
	\[
	f_{1,n}(x)= \frac{\displaystyle \mu K_nx}{\displaystyle K_n+(\mu - 1)x}\text{ and }f_{2,n}(x,y)=\frac{\displaystyle \alpha L_n y}{\displaystyle L_n+(\alpha-1)y+\beta x}.
	\]
Using a software such as Mathematica or Maple, one can verify that the solutions of the previous system are the points $O$, $\varepsilon_x$, $\varepsilon_y$ and $\mathcal{C}^*$, precisely the fixed points of the map $\Phi_2$. These computations  can be done analytically, however it involves long expressions. So, we take advantage using of the software since it is able to do it algebraically. 

From this we conclude that there are no periodic orbits of prime period $2$ under the action of the composition operator $\Phi_2$. Since the hypotheses of Theorem~\ref{main_th_PTM} are satisfied, every orbit converges  to a fixed point of $\Phi_2$ which is a geometric $2-$periodic cycle of Equation (\ref{PEQ}).

Moreover, since $\mathcal{C}^*$ is the unique fixed point of $\Phi_2$ in the interior of the first quadrant and all the eigenvalues lies in the interior of the unique disk, it follows by remark \ref{rGS} that $\mathcal{C}_2$ is a globally asymptotically stable $2-$periodic cycle of Equation (\ref{PEQ}) with respect to the interior of the first quadrant.
\subsection{Triangular periodic Logistic-Type Map}
Consider the non-autonomous triangular logistic-type map defined on $I^2=[0,1]^2$ given by
\begin{equation}
F_n(x,y)=(\mu_n x(1-x),\nu_n y(1-y)x),
\end{equation}
where $\mu_i>0$ and $\nu_i>0$, for all $i=0,1,2,\ldots$.

Each individual map $F_i$, $i=0,1,2,\ldots$, has the following fixed points
\[
E_0=(0,0),~E_{1,i}=\left(\frac{\mu_i-1}{\mu_i},0\right)\text{ and }E_{2,i}=\left(\frac{\mu_i-1}{\mu_i},\frac{\mu_i+(1-\mu_i)\nu_i}{(1-\mu_i)\nu_i}\right).
\]
In Table \ref{tifpoints}  is presented the regions of local stability, in the parameter space,  of the individual fixed points of the maps $F_i$.

\begin{table}[h!]
	\begin{center}
		\begin{tabular}{c|c}
			Individual fixed point & Region of local stability \\ \hline\hline
			$E_{0}$ & $\mu _{i}\leq 1$ \\
			$E_{1,i}$ & $1<\mu _{i}\leq 3~~\wedge~~\dfrac{\mu_i-1}{\mu_i}\nu_i\leq 1 $ \\
			$E_{2,i}$ & $1<\mu _{i}\leq 3~~\wedge~~1<\dfrac{\mu_i-1}{\mu_i}\nu_i\leq 3$ \\
			\end{tabular}
		\caption{Regions, in the parameter space, of local stability of the fixed points $E_0$, $E_{1,i}$ and $E_{2,i}$, $i=0,1,2,\ldots$ of each individual the triangular logistic map.}
		\label{tifpoints}
	\end{center}
\end{table}

Notice that the orbits of $F_n$ are bounded since $\mu_nx(1-x)\leq\frac{\mu_n}{4}$ and $\nu_ny(1-y)x\leq\frac{\mu_i\nu_i}{16}$, for all $n=0,1,2,\ldots$.

Now, let us consider, for simplicity, the periodicity of the equation by taking $\mu_{n+2}=\mu_n$ and $\nu_{n+2}=\nu_n$, for all $n=0,1,2,\ldots$. This implies that the non-autonomous equation
\begin{equation}
X_{n+1}=F_n(X_n)\label{PLE}
\end{equation}
  is $2-$periodic since the sequence of maps $F_n$ is $2-$periodic. 
  
  In order to study the dynamics of Equation (\ref{PLE}) we will consider the composition map $\Phi_2=F_1\circ F_0$. It is a straightforward computation to see that, the fixed points of $\Phi_2$ are:
	\begin{enumerate}
		\item $\mathcal{E}_0=(0,0)$;
		\item $\mathcal{E}_1=(x^*,0)= \left(\dfrac{2}{3}-\dfrac{\sqrt[3]{2^2} \sqrt[3]{\Delta_1+3 \sqrt{3} \sqrt{\Delta_2}}}{6\mu _0^2 \mu _1}-\dfrac{\sqrt[3]{2} \left(\mu _0-3\right) \mu _0 \mu _1}{3\sqrt[3]{\Delta_1+3 \sqrt{3} \sqrt{\Delta_2}}},0\right)$;
		\item $\mathcal{E}_2=(x^*,y^*)$ 
	\end{enumerate}
where
\[
\Delta_1=2 \mu _1^3 \mu _0^6-9 \mu _1^3 \mu _0^5+27 \mu _1^2 \mu _0^4,
\]
\[
\Delta_2=\mu _0^8 \mu _1^4 \left(\left(4-\mu _1\right) \mu _1 \mu _0^2-2 \mu _1 \left(9-2 \mu _1\right) \mu _0+27\right),
\]
and
\[
y^*=\Psi(\mu_0,\mu_1,\nu_0,\nu_1):=\Psi.
\]
Since $\Psi$ involves a long expression, we omit it  here. We point out that it can be obtained explicitly by using a software such as Mathematica or Maple.  Notice that $x^*$ is real whenever $\Delta_2\geq 0$, i.e., whenever $\left(4-\mu _1\right) \mu _1 \mu _0^2-2 \mu _1 \left(9-2 \mu _1\right) \mu _0+27\geq 0$ (see Figure \ref{fig:Log_real}).
\begin{figure}
\centering
\includegraphics[scale=0.4]{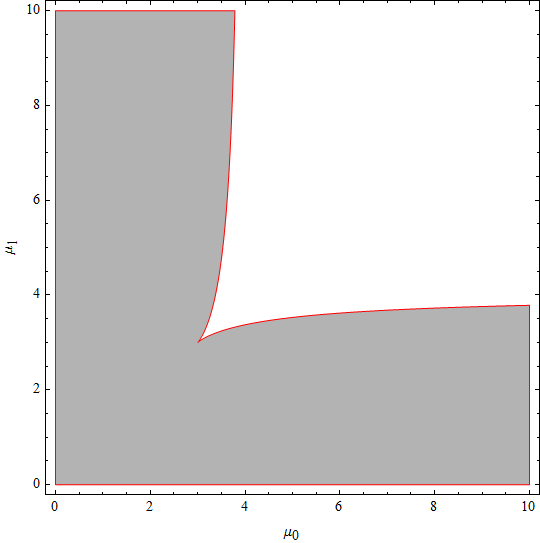}
\caption{Part of the region, in the parameter space $\mu_0O\mu_1$, where where the fixed point $x^*$ of $\Phi_2$ is real, i.e., the region where the following inequality is verified $\left(4-\mu _1\right) \mu _1 \mu _0^2-2 \mu _1 \left(9-2 \mu _1\right) \mu _0+27\geq 0$.}
\label{fig:Log_real}
\end{figure}

\begin{figure}[t]
\centering
\includegraphics[scale=0.4]{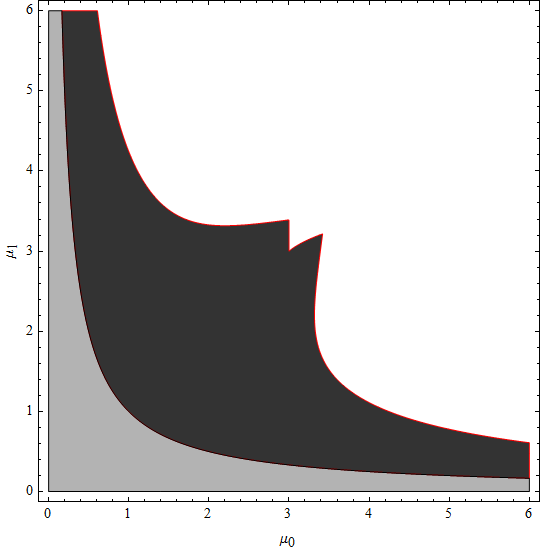}
\caption{Region of local stability, in the parameter space $\mu_0O\mu_1$,  of the fixed points of the $2-$periodic triangular logistic model. In the gray region the fixed point $\mathcal{E}_0$ is locally asymptotically stable while in the black region the fixed point $\mathcal{E}_1$ is locally asymptotically stable. In this second fixed point we consider $\nu_0=\nu_1=0.5$.}
\label{fig:Logistic}
\end{figure}

Now, the spectrum of the Jacobian matrices of $\Phi_2$ evaluated at the fixed points is given by $\sigma_{\mathcal{E}_0}=\{0,\mu_0\mu_1\}$, 
\[
\sigma_{\mathcal{E}_1}=\{\mu_0\mu_1(1+2x^*)(1-2\mu_0x^*(x^*-1)),\mu_0(x^*)^2(1-x^*)\nu_0\nu_1\}
\] 
and
\[
\sigma_{\mathcal{E}_2}=\{\mu_0\mu_1(1+2x^*)(1-2\mu_0x^*(x^*-1)),
\mu_0(x^*)^2(1-x^*)\nu_0\nu_1(1-2\Psi)(1+2\Psi(\Psi-1)x^*\nu_0)
\}.
\]

By considering parameters values as shown in Table~\ref{tpfpoints}, we know that each fixed point $\mathcal{E}_i$, $i=0,1,2$ is locally asymptotically stable. In Figure \ref{fig:Logistic} is presented the region of local stability, in the parameter space $\mu_0O\mu_1$, of the fixed points $\mathcal{E}_0$ and $\mathcal{E}_1$.

Let us now look how Theorem~\ref{main_th_PTM} establishes the global dynamics of the fixed points $\mathcal{E}_i$, $i=0,1,2$. Indeed, in order to apply Theorem~\ref{main_th_PTM}, it suffices to show that there are no periodic points, i.e., the solutions of the equation
\[
\Phi_2\circ\Phi_2(x,y)=(x,y),
\]
or equivalently
\[
F_1\circ F_0\circ F_1\circ F_0(x,y)=(x,y),
\]
are exactly the fixed points $\mathcal{E}_i$, $i=0,1,2$, when the parameters are restricted to the region established in Table \ref{tpfpoints}. 
\begin{table}[h!]
	\begin{center}
		\begin{tabular}{c|c}
			Fixed point & Region of local stability \\ \hline\hline
			$\mathcal{E}_0$ & $\mu_0\mu _{1}< 1$ \\[10pt]
			$\mathcal{E}_1$ &  $\mu_0\mu_1(1+2x^*)|1-2\mu_0x^*(x^*-1)|< 1~~\wedge~~\mu_0(x^*)^2|1-x^*|\nu_0\nu_1< 1$ \\[10pt]
			$\mathcal{E}_2$ &\begin{tabular}{c}
							$\mu_0\mu_1(1+2x^*)|1-2\mu_0x^*(x^*-1)|< 1~~\wedge$\\
							$\mu_0(x^*)^2|1-x^*|\nu_0\nu_1|(1-2\Psi)(1+2\Psi(\Psi-1)x^*\nu_0)|< 1$
							\end{tabular} \\
			\end{tabular}
		\caption{Regions, in the parameter space, of local stability of the fixed points $\mathcal{E}_i$, $i=0,1,2$ of the triangular logistic map $\Phi_2$.}
		\label{tpfpoints}
	\end{center}
\end{table}
Unfortunately, we are not able to solve explicitly the previous equation. At this stage the computations are able only by numerical methods. Using a software such as Mathematica or Maple one can see that $(0,0)$, $(x^*,0)$ and $(x^*,\Psi)$ are solutions of the equation. There are however solutions of some polynomial equation. Numerical computations shows that the solutions of this polynomial equation are not real under the conditions of local stability expressed in Table \ref{tpfpoints}.

Hence, if one assumes that this polynomial has no real roots whenever the parameters belongs to the stability region defined in Table \ref{tpfpoints}, then the map $\Phi_2$ has no periodic points of minimal period $2$. Moreover, all the fixed points $\mathcal{E}_i$, $i=0,1,2$ are hyperbolic and the hypotheses of Theorem \ref{main_th_PTM} are verified. Hence, we conclude that every orbit converges to a fixed point under the action of the composition $\Phi_2$ which is a periodic point of period $2$ of the non-autonomous equation $X_{n+1}=F_n(X_n)$. Moreover, since $\mathcal{E}_2$ is the unique fixed point of $\Phi_2$ in the interior of $I^2$, it follows that the non-autonomous equation $X_{n+1}=F_n(X_n)$ has a globally asymptotically stable $2-$periodic orbit in the interior of $I^2$.

\subsection{Triangular periodic exponential type model}

Let us consider the non-autonomous difference equation 
\begin{equation}
X_{n+1}=F_n(X_n)\label{NADE}
\end{equation} 
where the sequence of triangular maps $F_n: \mathbb{R}_+^2 \rightarrow \mathbb{R}_+^2$ are given by
\begin{equation}\label{eq:ricker}
F_n(x, y)=\left(xe^{r_n(1-x)},ye^{s_n(1-y-\mu x)}\right),
\end{equation}
where $0<r_n\leq 2$ and $0<s_n\leq 2$, for all $n$. In order to have periodicity let us assume that $r_{n+3}=r_n$ and $s_{n+2}=s_n$, for all $n=0,1,2,\ldots$. Hence, the non-autonomous equation (\ref{NADE}) is $6-$periodic.
Notice that all  orbits in (\ref{NADE}) are bounded since $xe^{r_n(1-x)}\leq \frac{1}{r_n}e^{r_n-1}$ and $ye^{s_n(1-y-\mu x)}\leq \frac{1}{s_n}e^{s_n-1}$, for all $n$.

It is a straightforward computation to see that $O=(0,0)$, $\varepsilon_x=(1,0)$, $\varepsilon_y=(0,1)$ and  $\mathcal{C}^*=(1,1-\mu)$ are common fixed points of all the maps $F_n$. Moreover, $O$ is a repeller fixed point,  $\varepsilon_x=(1,0)$ and  $\varepsilon_y=(0,1)$ are saddle and the eigenvalues of the Jacobian matrix $JF_i$ evaluated at the fixed point $\mathcal{C}^*$ lie inside the unit disk when
\[
0<s_i(1-\mu)\leq 2, ~~i=0,1.
\]
Moreover, computations shows that there are no $2-$periodic cycles in Equation (\ref{NADE}) since the solutions of $F_{i+1}\circ F_i(x,y)=(x,y)$, $i=0,1,\ldots,5$, are the fixed points of each individual map $F_i$. Consequently, by Theorem \ref{main_th_FP} every orbit in Equation (\ref{NADE}) converges to a fixed point. Moreover, since  $\mathcal{C}^*$ is the unique fixed point in the interior of the first quadrant, it follows by remark \ref{rGS} that $\mathcal{C}^*$ is globally asymptotically stable fixed point with respect to the interior of the first quadrant.

A suitable generalization of this model leads to the same conclusions. For instance, if we consider the sequence of triangular maps as follow
\[
F_n(x_1,\ldots,x_k)=(x_1e^{r_{1,n}(1-x_1)},x_2e^{r_{2,n}(1-x_2-\mu_1x_1)},\ldots,x_ke^{r_{k,n}(1-x_k-\sum_{j=1}^{k-1}\mu_jx_j)})
\]
where for each $j=1,2,\ldots,k$ we have $r_{j,n+p_j}=r_{j,n}$ such that $0<r_{j,n}\leq 2$, then the $p-$periodic difference equation $X_{n+1}=F_n(n)$, with $p=lcm(p_1,\ldots,p_k)$, has a globally asymptotically stable fixed point  of the form
\begin{eqnarray*}
\mathcal{C}^*&=&\left(1,1-\mu_1,\prod_{i=1}^{2}(1-\mu_i),\ldots,\prod_{i=1}^{k-1}(1-\mu_i)\right)\\
\end{eqnarray*}
with respect to the interior of the first orthant of $\mathbb{R}^k_+$ whenever 
\begin{eqnarray*}
0&<&r_{1,n_1}\leq 2,~~\text{ for all }n_1=0,1,2,\ldots,p_1-1
\end{eqnarray*}
and 
\begin{eqnarray*}
0&<&r_{j,n_j}\prod_{i=1}^{j-1}(1-\mu_i)\leq 2,
\end{eqnarray*}
for all $n_j=0,1,2,\ldots,p_j-1$ and $j=2,3,\ldots,k$.

\section*{Acknowledgements}
This work was partially supported by FCT/Portugal through project PEst-OE/EEI/LA0009/2013

\end{document}